\theoremstyle{plain}
\newtheorem{thm}{Theorem}[section]
\newtheorem{prop}[thm]{Proposition}
\theoremstyle{definition}
\newtheorem{eg}[thm]{Example}
\theoremstyle{remark}
\newtheorem{rem}[thm]{Remark}
\newtheorem{claim}[thm]{Claim}
\newcommand{\bC}{\ensuremath{\mathbb{C}}}
\newcommand{\bP}{\ensuremath{\mathbb{P}}}
\newcommand{\bZ}{\ensuremath{\mathbb{Z}}}
\newcommand{\cL}{\ensuremath{\mathcal{L}}}
\newcommand{\cN}{\ensuremath{\mathcal{N}}}
\newcommand{\cO}{\ensuremath{\mathcal{O}}}
\newcommand{\cX}{\ensuremath{\mathcal{X}}}
\DeclareMathOperator{\Pic}{Pic}
\DeclareMathOperator{\Sing}{Sing}
\DeclareMathOperator{\Aut}{Aut}
\DeclareMathOperator{\Image}{Im}
\DeclareMathOperator{\id}{id}
\DeclareMathOperator{\NS}{NS}
\begin{document}

\title
[Non-K\"{a}hler Calabi-Yau manifolds]
{Construction of non-K\"{a}hler Calabi-Yau manifolds by log deformations}

\author{Taro Sano}
\address{Department of Mathematics, Faculty of Science, Kobe University, Kobe, 657-8501, Japan}
\email{tarosano@math.kobe-u.ac.jp}

\maketitle

\begin{abstract} 
	We survey recent construction of non-K\"{a}hler Calabi-Yau manifolds by smoothing SNC varieties obtained by non-trivial isomorphisms of strict Calabi-Yau manifolds. We also give a new example by smoothing an SNC 3-fold which are constructed from automorphisms of an abelian surface.  
		\end{abstract}


\section{Introduction} 

In this note, we survey recent constructions of non-K\"{a}hler Calabi-Yau manifolds, which proceed by smoothing simple normal crossing varieties obtained by gluing pairs of manifolds along strict Calabi-Yau hypersurfaces identified by a non-trivial isomorphism (cf. \cite{Hashimoto:aa}, \cite{MR4406696}). We also give a new example of such a construction, by smoothing simple normal crossing threefolds which are constructed by gluing two threefolds along an abelian surface. 
A compact complex manifold $X$ of dimension $n$ is called a {\it Calabi-Yau manifold} ({\it CY $n$-fold}) if $\omega_X \simeq \cO_X$ and $H^i(X, \cO_X) = 0 = H^0(X, \Omega^i_X)$ for $0 < i < \dim X$. 
In some literature, it is also called a {\it strict Calabi-Yau manifold} to emphasize the condition on the cohomology groups. 

Projective Calabi-Yau manifolds are one of building blocks in the classification of algebraic varieties. 
Note that 1-dimensional Calabi-Yau manifolds are elliptic curves and 2-dimensional Calabi-Yau manifolds are K3 surfaces. 
It is an open problem whether there are only finitely many topological types of projective Calabi-Yau manifolds in a fixed dimension $N \ge 3$. 

On the other hand, it had been known that there are infinitely many topological types of non-K\"{a}hler Calabi-Yau 3-folds (cf. \cite{MR1141199}, \cite{Hashimoto:aa} and references therein). 

%
%
%

Our examples in \cite{MR4406696} are the following. 

\begin{thm}\label{thm:intro}
Let $N \ge 3$ and $m \in \bZ_{>0}$. 

There exists a non-K\"{a}hler Calabi--Yau $N$-fold $X(m)$ with the following properties 
\begin{enumerate}
\item[(i)] The 2nd Betti number is $b_2(X(m)) = \begin{cases}
m+1 & (N=3)\\
m+10 & (N=4) \\
m+2 & (N \ge 5)
\end{cases}$. 
\item[(ii)] The algebraic dimension of $X(m)$ is $N-2$ and $X(m)$ admits a K3 fibration $X(m) \rightarrow T$ to a smooth rational variety $T$. 
\item[(iii)] The Hodge to de Rham spectral sequence on $X(m)$ degenerates at $E_1$. 
\end{enumerate}
\end{thm}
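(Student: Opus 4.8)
The plan is to realise $X(m)$ as a general fibre of a semistable (log) smoothing of a two‑component simple normal crossing variety. First I would fix smooth compact complex $N$‑folds $V_1,V_2$, each carrying a smooth hypersurface $D_i$ with $\omega_{V_i}(D_i)\simeq\cO_{V_i}$ and with $D_i$ a strict Calabi--Yau $(N-1)$‑fold; the building blocks are moreover chosen to carry fibrations over a smooth rational $(N-2)$‑fold which, after smoothing, assemble into the K3 fibration $X(m)\to T$, and the integer $m$ is built into the choice of the $V_i$ (say through a blow‑up, or through the choice of a Hirzebruch base), producing the three regimes of $b_2$. The decisive ingredient is a \emph{non-trivial} biholomorphism $\sigma\colon D_1\to D_2$ -- one not induced by an isomorphism of the ambient geometries -- used to glue $V_1$ and $V_2$ along $D_1\simeq_\sigma D_2$ into $X_0:=V_1\cup_\sigma V_2$. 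Since by adjunction $N_{D_i/V_i}\simeq\omega_{V_i}^{-1}|_{D_i}$, the $V_i$ are arranged so that $N_{D_1/V_1}\otimes\sigma^{*}N_{D_2/V_2}\simeq\cO_{D_1}$, i.e.\ $X_0$ is $d$‑semistable in Friedman's sense, and $\omega_{X_0}\simeq\cO_{X_0}$. After checking the cohomological hypotheses (vanishing of $H^{N-1}(X_0,\cO_{X_0})$ and control of the relevant $T^1$ and obstruction spaces), the logarithmic smoothing theorem of Kawamata--Namikawa (or the gluing results of Felten--Filip--Ruddat) yields a semistable degeneration $\pi\colon\cX\to\Delta$ whose general fibre $X=X(m)$ is a compact complex manifold with $\omega_X\simeq\cO_X$.

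Next I would extract the Hodge‑theoretic invariants from $\pi$ via the Clemens--Schmid exact sequence and Steenbrink's weight spectral sequence. Computing $H^{\bullet}(X_0)$ by Mayer--Vietoris in terms of $H^{\bullet}(V_i)$ and $H^{\bullet}(D)$ and combining with the monodromy logarithm $N$, one obtains $H^q(X,\cO_X)=0$ and $H^0(X,\Omega^q_X)=0$ for $0<q<N$, so that $X(m)$ is indeed a strict Calabi--Yau $N$‑fold, and one reads off $b_2(X(m))$ from $\ker N$ on $H^2(X_0)$ together with the graded pieces of the monodromy weight filtration; carrying this through for the three families of building blocks gives the case‑by‑case formula in~(i).

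For (ii) and non‑Kählerity, the fibrations on $V_1,V_2$ glue to a map $X_0\to T_0$ which, after arranging $\cX\to\Delta$ to be relative over a smoothing $T$ of $T_0$, deforms to a K3 fibration $f\colon X(m)\to T$ with $T$ smooth and rational; since $T$ is projective, $a(X(m))\ge\dim T=N-2$. The reverse inequality is exactly where the non‑triviality of $\sigma$ is used: it prevents a polarisation on the two pieces from propagating coherently through the smoothing, so $f$ has non‑constant period map and its very general fibre lies off the countably many Noether--Lefschetz loci in the K3 moduli, hence is a non‑projective, non‑elliptic K3 surface, i.e.\ has algebraic dimension $0$. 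Comparing function fields over $\mathbf{C}(T)$ (Ueno's inequality for fibre spaces) then gives $a(X(m))\le\dim T+0=N-2$, proving~(ii). Finally, since $X(m)$ is a strict Calabi--Yau with $N\ge 3$ we have $h^2(X(m),\cO_{X(m)})=h^0(X(m),\Omega^2_{X(m)})=0$, so $\Pic(X(m))\to H^2(X(m),\bZ)$ is surjective and $H^2(X(m),\mathbf{C})=H^{1,1}$ if $X(m)$ is Kähler; then a rational class in the Kähler cone would be the Chern class of a positive line bundle and $X(m)$ would be projective, contradicting $a(X(m))=N-2<N$. Hence $X(m)$ is non‑Kähler.

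It remains to prove (iii). Although $X(m)$ is non‑Kähler, the K3‑surface fibres of $f$, the divisor $D$, and the building blocks $V_1,V_2$ have degenerate Hodge‑to‑de Rham spectral sequences (automatic for the K3 fibres and for $D$, and an explicit check for the chosen $V_i$), so Steenbrink's description of the limit mixed Hodge structure applies: the associated weight spectral sequence degenerates, which computes all Hodge numbers $h^{p,q}(X(m))$ and yields $\sum_{p+q=k}h^{p,q}(X(m))=b_k(X(m))$ for every $k$. This numerical identity is equivalent to the degeneration of the Frölicher spectral sequence of $X(m)$ at $E_1$, proving~(iii). The main obstacle in this program is twofold: engineering $V_1,V_2$ and $\sigma$ so that $X_0$ is simultaneously $d$‑semistable with trivial dualising sheaf \emph{and} produces the advertised K3 fibration with $m$ tuning $b_2$ through all three cases; and pinning the algebraic dimension down to exactly $N-2$, i.e.\ proving that the general K3 fibre is genuinely non‑algebraic -- it is here that the non‑triviality of the gluing isomorphism has to do its real work.
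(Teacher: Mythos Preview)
Your outline is essentially the paper's strategy: build a two-component d-semistable SNC Calabi--Yau $X_0=X_1\cup X_2$, apply the Kawamata--Namikawa/Chan--Leung--Ma smoothing theorem, and read off invariants via Mayer--Vietoris and Clemens--Schmid. The building blocks you posit are also the right shape: in the paper $Y_1=Y_2=\bP^2\times T$ with $T\subset\bP^1\times\bP^n$ a $(1,n+1)$-hypersurface (rational, of dimension $N-2$), and $D_i\simeq S\times_{\bP^1}T$ with $S\subset\bP^2\times\bP^1$ a $(3,1)$-hypersurface (a rational elliptic surface); $D_i$ is a projective strict CY $(N-1)$-fold.

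There is, however, one point where your logic is inverted, and it is exactly the mechanism that makes the construction work. You write that the $V_i$ are ``arranged so that $N_{D_1/V_1}\otimes\sigma^*N_{D_2/V_2}\simeq\cO_{D_1}$'' and separately that ``the integer $m$ is built into the choice of the $V_i$ (say through a blow-up\ldots)''. In the paper these are not independent choices: the non-trivial isomorphism is $\Phi_m$, induced from an explicit $\phi_m\colon S\to S_m$ obtained by composing $6m$ quadratic (Cremona) transformations, and the whole point is that $Y_1\cup^{\Phi_m}Y_2$ is \emph{not} d-semistable. Rather, one proves that
\[
\cN_{D_1/Y_1}\otimes\Phi_m^*\cN_{D_2/Y_2}\;\simeq\;\cO_{D_1}(F_1+\cdots+F_m+\Gamma_m)
\]
for $m+1$ specific smooth divisors (this uses the key lemma that $|H_S+\phi_m^*H_{S_m}+mK_S|$ is ample and free). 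Blowing up $Y_1$ along those $m+1$ centres simultaneously achieves d-semistability and introduces the $m+1$ exceptional divisors that account for $b_2(X(m))\sim m$. So the growth of $b_2$ is not a free parameter in the choice of $V_i$; it is forced by how badly $\Phi_m$ breaks d-semistability. Your proposal would benefit from making this causal link explicit, since without it there is no reason the two requirements (d-semistability after blow-up, and $b_2$ growing linearly in $m$) can be met simultaneously.

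A smaller remark on (ii): your Noether--Lefschetz argument for $a(X(m))\le N-2$ is plausible but is not the route taken. The paper (and the companion computation in Section~4 for the abelian-surface example) instead shows directly, via the surjection $\Pic\cX\twoheadrightarrow\Pic X(m)$ and an explicit restriction computation on $X_0$, that any line bundle on $X(m)$ with $\kappa\ge 1$ beyond the pullback from $T$ would force an impossible numerical identity on $D$. This is where the concreteness of $\phi_m$ is used again, and it avoids having to control the period map of the K3 fibration.
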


An essential ingredient of the construction is the log deformation theory of normal crossing varieties developed by Kawamata--Namikawa (See also recent results by \cite{Chan:2019vv} and \cite{MR4304077}). 

\begin{rem}
Lee \cite{https://doi.org/10.48550/arxiv.2102.12656} constructed an example of a non-K\"{a}hler Calabi-Yau $4$-fold by smoothing SNC varieties. 
\end{rem}

\begin{rem}
Note that, if $X$ and $Y$ are Calabi--Yau manifolds such that $\dim X, \dim Y >0$, then $X\times Y$ also has a trivial canonical bundle, but it does not satisfy $h^i(\cO_{X \times Y}) =0$ for $0 < i < \dim (X \times Y)$. 
\end{rem}
	
Let us explain the contents of this note. 
In section 2, we explain the results on smoothing of an SNC CY variety and give a brief review of previous examples of non-K\"{a}hler CY 3-folds. In section 3, we give a brief explanation of the examples in \cite{MR4406696}. In section 4, we give further examples of non-K\"{a}hler CY 3-folds obtained by smoothing SNC CY 3-folds which are union of two smooth varieties glued along an abelian surface. We also comment on some open problems. 	
	
\section{Log deformation theory of SNC Calabi--Yau varieties}

In this section, we shall explain log deformation theory. 
We say that a proper SNC $\bC$-scheme $X$ is an {\it SNC Calabi--Yau variety} if $\omega_X \simeq \cO_X$. 
The following is an essential ingredient. A proper SNC $\bC$-scheme is {\it d-semistable} if we have an isomorphism of  sheaves $Ext^1_{\cO_X} (\Omega^1_X, \cO_X) \simeq \cO_D$, where $D:= \Sing X$ is the singular locus of $X$. 

\begin{rem}
Let us recall the definition of d-semistability \cite[Definition 1.13]{MR707162} in a simple case. 
Let $X= X_1 \cup X_2$ be an SNC variety which is a union of two smooth varieties $X_1$ and $X_2$. Let $D:= X_1 \cap X_2$ be the singular locus of $X$. 
Then $X$ is d-semistable if $\cN_{D/X_1} \otimes \cN_{D/X_2} \simeq \cO_D$. It is also equivalent to the existence of a log smooth structure on $X$ (cf. \cite{MR1296351}, \cite{MR1404507}). 
\end{rem}

\begin{thm}\label{thm:smoothingSNC}(\cite{MR1296351}, \cite{Chan:2019vv})
Let $X$ be an SNC Calabi--Yau variety. 
Assume that $X$ is d-semistable. 

Then $X$ admits a deformation $\phi \colon \cX \rightarrow \Delta^1$ over a unit disk $\Delta^1$ such that $\cX$ is smooth and $\cX_t:= \phi^{-1}(t)$ is smooth for $t \neq 0$. 
\end{thm}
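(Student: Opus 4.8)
The plan is to follow the logarithmic deformation strategy of Kawamata and Namikawa: realize the desired smoothing as one member of a family of logarithmic deformations of $X$, and prove that this family is unobstructed by a Bogomolov--Tian--Todorov-type argument, i.e. a $T^{1}$-lifting criterion fed by Hodge theory.

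\textbf{Step 1: the logarithmic structure and the contraction isomorphism.} As recalled above, $d$-semistability of the SNC variety $X$ is equivalent to the existence of a log smooth morphism $f\colon X^{\dagger}\to O^{\dagger}$ from $X$ (with a suitable log structure) to the standard log point $O^{\dagger}=(\Spec\bC,\bN)$. Then $\Omega^{1}_{X^{\dagger}/O^{\dagger}}$ is locally free of rank $n:=\dim X$, and because the log structure is carried by the full singular locus one has $\omega_{X^{\dagger}/O^{\dagger}}:=\det\Omega^{n}_{X^{\dagger}/O^{\dagger}}\simeq\omega_{X}\simeq\cO_{X}$. Fixing a nowhere-vanishing section $\omega$ and setting $\Theta_{X^{\dagger}}:=\mathcal{H}om_{\cO_{X}}(\Omega^{1}_{X^{\dagger}/O^{\dagger}},\cO_{X})$, contraction with $\omega$ gives an isomorphism $\iota_{\omega}\colon\Theta_{X^{\dagger}}\xrightarrow{\ \sim\ }\Omega^{n-1}_{X^{\dagger}/O^{\dagger}}$, hence $H^{i}(X,\Theta_{X^{\dagger}})\simeq H^{i}(X,\Omega^{n-1}_{X^{\dagger}/O^{\dagger}})$ for every $i$; the same identification holds relatively over an Artinian base once one checks that the relative log canonical sheaf remains trivial along the deformation.

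\textbf{Step 2: unobstructedness of log deformations.} Consider the functor of logarithmic deformations of $X^{\dagger}$ over local Artin $\bC$-algebras carrying the pulled-back standard log structure; first-order deformations are $H^{1}(X,\Theta_{X^{\dagger}})$ and obstructions lie in $H^{2}(X,\Theta_{X^{\dagger}})$. To kill the obstructions I would invoke the $T^{1}$-lifting criterion (Ran, Kawamata, Fantechi--Manetti): it suffices that for every log deformation $X_{n}^{\dagger}$ over $A_{n}=\bC[t]/(t^{n+1})$ the restriction map on relative $T^{1}$-groups attached to the square-zero quotient $A_{n}\to A_{n-1}$ is surjective. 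Via the relative analogue of $\iota_{\omega}$ these relative $T^{1}$-groups become the Hodge pieces $H^{1}(X_{n},\Omega^{n-1}_{X_{n}^{\dagger}/A_{n}^{\dagger}})$, and the surjectivity reduces to: (a) $E_{1}$-degeneration of the relative logarithmic Hodge--de Rham spectral sequence
\[
E_{1}^{p,q}=H^{q}\bigl(X_{n},\Omega^{p}_{X_{n}^{\dagger}/A_{n}^{\dagger}}\bigr)\ \Longrightarrow\ \bH^{p+q}\bigl(X_{n},\Omega^{\bullet}_{X_{n}^{\dagger}/A_{n}^{\dagger}}\bigr);
\]
and (b) freeness and base change for the relative log de Rham hypercohomology. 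Granting these, the hull of the log-deformation functor is a formal power series ring $\bC[[t_{1},\dots,t_{r}]]$ with $r=\dim H^{1}(X,\Theta_{X^{\dagger}})$.

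\textbf{Step 3 (the main obstacle): Hodge theory, and extracting the analytic smoothing.} I expect (a) to be the crux, precisely because $X$ is singular: for $A_{0}=\bC$ it is the $E_{1}$-degeneration of the log Hodge--de Rham spectral sequence of a proper log smooth $\bC$-scheme, provable either from Steenbrink-type limiting mixed Hodge theory after a semistable reduction or from a logarithmic Deligne--Illusie mod-$p^{2}$ lifting argument, while the relative statement over $A_{n}^{\dagger}$ follows by the same method together with flat base change (which also gives (b)); by contrast the contraction isomorphism and the $T^{1}$-lifting bookkeeping are essentially formal. (An alternative to this entire argument is the Maurer--Cartan / Batalin--Vilkovisky construction of \cite{Chan:2019vv}, which builds the smoothing directly and proves convergence without passing through a formal hull.) Finally, by Grauert's existence theorem the analytic functor of log deformations of $X^{\dagger}$ has a semiuniversal deformation over an analytic germ whose completion is the hull of Step~2, so by unobstructedness this germ is smooth, say $(\bC^{r},0)$; restricting to the one-parameter direction that smooths the singular locus --- where, \'etale locally, $X=\{z_{1}\cdots z_{k}=0\}$ deforms to $\{z_{1}\cdots z_{k}=t\}$, visibly with smooth total space and smooth fibres for $t\neq0$, and $d$-semistability is exactly what glues these local models --- yields, over a small disk, a deformation $\phi\colon\cX\to\Delta^{1}$ with $\cX$ smooth and $\cX_{t}$ smooth for $t\neq0$, as required.
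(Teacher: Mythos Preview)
The paper does not give its own proof of this theorem; it is stated as a cited result of Kawamata--Namikawa and of Chan--Leung--Ma, and is used as a black box throughout. Your sketch accurately reproduces the Kawamata--Namikawa strategy (log structure from $d$-semistability, $T^{1}$-lifting fed by the contraction isomorphism and $E_{1}$-degeneration of the relative log Hodge--de Rham spectral sequence, then passage from formal to analytic), and you correctly flag the Batalin--Vilkovisky approach of \cite{Chan:2019vv} as an alternative; so there is nothing substantive to compare, and your outline is sound at the level of a sketch.

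One cautionary remark: the original argument in \cite{MR1296351} was carried out under the extra hypothesis $H^{n-1}(X,\cO_{X})=0$, which enters precisely in the Hodge-theoretic input you identify as the crux; the unconditional statement (as used in the paper, notably in Example~\ref{eg:intersectionabelsurf} where the irreducible components have $H^{1}(\cO_{X_{i}})\neq0$) genuinely requires the later work \cite{Chan:2019vv} or \cite{MR4304077}. Your Step~3 glosses over exactly this point, so if you were to flesh this out you should be explicit about which route you are taking and why no auxiliary cohomological vanishing is needed.
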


\begin{rem}
In the above theorem, if $\dim X \ge 3$, $H^i(X, \cO_X) =0$ for $0 < i < \dim X$ and $X$ is projective, then the general fiber $\cX_t$ is a projective CY manifold by $H^2(X, \cO_X) =0$ and Grothendieck's existence theorem. 
Note that Theorem \ref{thm:smoothingSNC} holds even when $X$ is not projective. 
Then the smoothing $\cX_t$ can be non-projective in general.  
\end{rem}	

\begin{eg}
We can construct an SNC variety $X_0 = X_1 \cup X_2$ by gluing smooth varieties $X_1$ and $X_2$ with smooth divisors $D_i \subset X_i$ for $i=1,2$ with an isomorphism $\phi \colon D_1 \xrightarrow{\simeq} D_2$. 
If $D_i \in |{-}K_{X_i}|$ for $i=1,2$ and they are connected, then we see that $X_0$ satisfies that $\omega_{X_0} \simeq \cO_{X_0}$, thus $X_0$ is an SNC CY. We write $X_0=: X_1\cup^{\phi} X_2$. 

Note that we really need the connectedness assumption. 
Indeed, if $E$ is an elliptic curve and $X_1 = X_2 = \bP^1 \times E$, 
then we can glue $X_1$ and $X_2$ along $\{0, \infty \} \times E$ via 
 the identity $\id \in \Aut (0 \times E)$ and the negation $ \phi_{-1} \in \Aut (\infty \times E)$. Then we see that $X_0:= X_1 \cup X_2$ satisfies that $\omega_{X_0} \not\simeq \cO_{X_0}$ since we compute $H^0(\omega_{X_0}) =0$ by the exact sequence 
 \[
 0 \rightarrow \omega_{X_0} \rightarrow \omega_{X_1}(D_1) \oplus \omega_{X_2} (D_2) \xrightarrow{(r_1, -r_2)} \omega_{D_1} \rightarrow 0, 
 \]
 where $r_1$ is the usual residue map and $r_2$ is a composition of the residue map and $\phi^*$ for $\phi \colon D_1 \xrightarrow{\sim} D_2$. 
\end{eg}

We give a review on non-K\"{a}hler Calabi-Yau 3-folds constructed in the 90's. 

\begin{eg}
We can construct infinitely many topological types of non-K\"{a}hler CY 3-folds as analytic flops of smooth rational curves of large degrees on a projective CY 3-fold. 
Let us briefly recall the construction in the following (cf. \cite{MR720930}, \cite{MR1282199}). 

Let $S=(q = 0) \subset \bP^3$ be a smooth quartic surface defined by $q \in |\cO_{\bP^3}(4)|$ with a smooth rational curve $C_d \subset S$ of degree $d$ which exists by \cite{MR771073}.  
Let $l \in |\cO_{\bP^3}(1)|$ be a linear form and embed $\bP^3$ as a hyperplane $(z_4=0) \subset \bP^4$. 
Let $$V_0:= (q \cdot l + z_4 \cdot f =0) \subset \bP^4$$ be a quintic hypersurface, where $f \in |\cO_{\bP^4}(4)|$ is a general quartic form. 
Then, for a general choice of $l, f$, we see that 
$$\Sing V_0= (q=l=z_4= f=0) \subset \bP^4$$ and they are 16 nodes.  
Note that the family of hyperplanes $(az_4 - b l =0) \subset \bP^4$ for $[a : b] \in \bP^1$ induces a family of quartic surfaces $\{S_{[a : b]} \mid [a : b] \in \bP^1 \}$ such that $S_{[1 : 0]} =S$ and $C_d \subset S$ does not lift to the deformation since $f$ is general (cf. \cite[Lemma 2.6]{MR1282199}). 
Hence we see that the normal bundle $\cN_{C_d/V_0} \simeq \cO_{C_d}(-1)^{\oplus 2}$, that is, $C_d$ is a $(-1,-1)$-curve on $V$. 
This implies that a general quintic 3-fold $V \subset \bP^4$ contains a $(-1,-1)$-curve $C$ of degree $d$. 
By taking an analytic flop $V \dashrightarrow V_d$ of $C$, we obtain a Moishezon CY 3-fold $V_d$ such that $\Pic V_d = \bZ H_d$, where $H_d \subset V_d$ is the strict transform of the hyperplane  $H= \cO_V(1)$. We check that $H_d^3 = 5-d^3$, thus we see that $V_d$ is non-K\"{a}hler when $d \ge 2$. This implies that $\{V_d \mid d \ge 2 \}$ provides infinitely many topological types of non-K\"{a}hler CY 3-folds since the cubic forms on $H^2(V_d, \bZ)$ are different.  
\end{eg}

\begin{eg}
Clemens, Friedman and Reid (\cite{MR720930}, \cite{MR1141199}, \cite{MR909231}) constructed infinitely many topological types of non-K\"{a}hler CY 3-folds as conifold transitions of a general smooth quintic 3-fold $V$ with infinitely many pairwise disjoint $(-1,-1)$-curves $\Gamma_1, \Gamma_2, \cdots$. 
We give a review on the construction in the following. 

Let $V:= (ql + z_4 f =0) \subset \bP^4$ be a quintic 3-fold as above. 
Let $D_i \subset |\cO_{\bP^3}(4)|$ be the divisor which contains the locus of smooth quartic surfaces with smooth rational curves of degrees $i$ (For example, the image of the corresponding flag Hilbert scheme).  
The family of quartic surfaces $\{S_{[a:b]} \mid [a:b] \in \bP^1 \}$ induces a curve $$\gamma= \gamma_{q,l,f} \colon \bP^1 \rightarrow |\cO_{\bP^3}(4)|$$ of degree $5$. 
For a general choice of $q, l, f$, we see that $\gamma$ and $D_i$ intersects (transversely) at (distinct) points which are not on any $D_j (j \neq i)$ and correspond to smooth quartic surfaces. We can pick distinct $[a_i  \colon b_i] \in \bP^1$ so that the corresponding quartic surface $S_{[a_i:b_i]}$ contains a smooth rational curve $\Gamma_i$ of degree $i$. By choosing general $l$ and $f$, we may take $\Gamma_i$ which avoids the nodes of $V$. Hence $\{\Gamma_i \mid i=1,2,\ldots \}$ are pairwise disjoint.  
By smoothing the nodes, we see that a general quintic 3-fold $V$ contains pairwise disjoint  smooth rational curves $\Gamma_i$ of degrees $i$ for $i=1,2, \ldots$.


Let $m \in \bZ_{>0}$ be any positive integer and choose $\Gamma_1, \ldots , \Gamma_m \subset V$. Then we have a birational contraction $\mu \colon V \rightarrow \bar{V}_m$ of $\Gamma_1, \ldots , \Gamma_m$ to a Moishezon space $\bar{V}_m$. 
Then $\bar{V}_m$ has $m$ ordinary double points $p_i := \mu(\Gamma_i)$ and there is a smoothing of $\bar{V}_m$ to a Calabi-Yau 3-fold $Y_m$ by \cite[Corollary 4.7]{MR848512}. 
Then we see that $b_2(Y_m) =0$ and $e(Y_m) = -200-2m$, where $b_2 (Y_m)$ is the 2nd Betti number and $e(Y_m)$ is the topological Euler number. 

We are not sure whether these constructions can be generalized in  higher dimensions.   
\end{eg}	
	
Non-K\"{a}hler CY manifolds including non-strict ones has been studied by various authors (cf. \cite{MR2679581}, \cite{MR3372471},  \cite{MR3784517} and references therein).	
	
\section{Construction of non-K\"{a}hler Calabi-Yau manifolds}\label{section:CYnexamples}	
	
In the construction of our non-K\"{a}hler Calabi-Yau manifolds, the choice of an isomorphism	$\phi \colon D_1 \rightarrow D_2$ to construct an SNC variety is crucial. We give a brief review of the examples in Theorem \ref{thm:intro}. 
	
\begin{eg} 
Let $S:= (s F_1 + t F_2 =0) \subset \bP^2 \times \bP^1$ be a general hypersurface of bi-degree $(3,1)$, where $F_1, F_2 \in |\cO_{\bP^2}(3)|$ are general cubics and $[s:t] \in \bP^1$ are homogeneous coordinates. Then we see that the 1st projection $\pi_1 \colon S \rightarrow \bP^2$ is a blow-up at 9 points $\{p_1, \ldots ,p_9 \} = (F_1 = F_2 =0) \subset \bP^2$. We also see that $\pi_2 \colon S \rightarrow \bP^1$ is an elliptic fibration, thus $S$ is a rational elliptic surface. 


A quadratic transformation $\psi_{123} \colon \bP^2 \dashrightarrow \bP^2$ at $p_1, p_2, p_3$ induces an isomorphism $\phi_{123} \colon S \xrightarrow{\simeq} S_{123}$ to another hypersurface $S_{123} \subset \bP^2 \times \bP^1$ of bi-degree $(3,1)$. We can perform the same operations to construct 
isomorphisms 
\[
S \rightarrow S_{123} \rightarrow S_{456} \rightarrow S_{789}=S' \rightarrow S'_{123} \rightarrow S'_{456} \rightarrow S'_{789}=:S_1. 
\]
Now let $\psi_1: S \rightarrow S_1$ be the composition of these 6 isomorphisms. 
Finally, for any $m \in \bZ_{>0}$, we repeat this process $m$-times to obtain 
\[
\phi_m \colon S \xrightarrow{\psi_1} S_1 \rightarrow \cdots \rightarrow S_{m-1} \xrightarrow{\psi_{m}} S_m. 
\]
An important feature of $\phi_m$ is the following:  

\begin{prop}\cite[Proposition 2.6(iii)]{MR4406696} Let $S, S_m \subset \bP^2 \times \bP^1$ be the $(3,1)$-hypersurfaces as above. 
 Let $H_S:= \mu_S^* \cO_{\bP^2}(1)$ and $H_{S_m}:= \mu_{S_m}^* \cO_{\bP^2}(1)$, where $\mu_S \colon S \rightarrow \bP^2$ and $\mu_{S_m} \colon S_m \rightarrow \bP^2$ are the projections. Let $\phi_m \colon S \rightarrow S_m$ be the isomorphism as above. 

Then the linear system $|H_S + \phi_m^* H_{S_m} +mK_S|$ is ample and free. 
\end{prop}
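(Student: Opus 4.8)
The plan is to understand very concretely the divisor class $H_S + \phi_m^*H_{S_m} + mK_S$ on the rational elliptic surface $S$, to identify the curves on $S$ against which it could fail to be nef, and then to promote nefness to freeness and ampleness by a case analysis on $S$. Throughout I would use the standard geometry of a rational elliptic surface: $K_S = -f$, where $f$ is the class of a fiber of the elliptic fibration $\pi_2\colon S\to\bP^1$, and $f = 3H_S - E_1 - \cdots - E_9$ in terms of the blow-up $\mu_S\colon S\to\bP^2$ at $p_1,\dots,p_9$, with the $E_i$ the exceptional curves. The key point is to trace through what each quadratic transformation $\psi_{ijk}$ does on the level of divisor classes, so that $\phi_m^*H_{S_m}$ can be written explicitly in the basis $\{H_S, E_1,\dots,E_9\}$; a single Cremona move $\psi_{123}$ replaces $H\mapsto 2H - E_1 - E_2 - E_3$ and permutes/adjusts the $E_i$ accordingly, and the composition $\psi_1$ of six such moves (and then its $m$-th iterate) gives a controlled, explicit class. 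I expect that $\phi_m^*H_{S_m}$ grows linearly in $m$ in the $f$-direction, so that the combination $H_S + \phi_m^*H_{S_m} + mK_S = H_S + \phi_m^*H_{S_m} - mf$ has bounded intersection with the relevant curves.

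First I would compute $L_m := H_S + \phi_m^*H_{S_m} + mK_S$ explicitly and record $L_m\cdot f$, $L_m\cdot E_i$, $L_m\cdot (\text{section classes})$, $L_m^2$, and $L_m\cdot K_S$. Next, to establish nefness, I would invoke the classification of $(-1)$- and $(-2)$-curves and, more generally, of the extremal rays of $\NEbar(S)$ on a rational elliptic surface: the Mori cone is generated by the fiber class $f$ together with finitely many rational curves of negative self-intersection (the $(-1)$-curves, i.e. the sections and their Cremona images, and any $(-2)$-curves sitting in reducible fibers). So it suffices to check $L_m\cdot C\ge 0$ for $C = f$ and for each such negative curve $C$; this is a finite, explicit check once the class of $L_m$ is in hand, and I would expect $L_m\cdot f$ to be a fixed positive number (coming from the $H_S$ and $\phi_m^*H_{S_m}$ contributions, since $K_S\cdot f=0$) and $L_m\cdot C\ge 0$ on the remaining curves by the general-position hypotheses on the original $F_1,F_2$ (which guarantee the nine base points, hence the elliptic fibration, is sufficiently generic).

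Having nefness, I would pass to freeness and ampleness. For a nef divisor on a smooth rational surface, base-point-freeness typically follows from $L_m^2 > 0$ together with $L_m\cdot C > 0$ for every curve $C$ with $C^2 < 0$ — i.e. once one checks that $L_m$ meets every negative curve strictly positively it is in fact ample, and on a rational surface a nef and big divisor with this property is semiample, and ample forces free. So the argument collapses to: (a) $L_m^2>0$; (b) $L_m\cdot C>0$ for all $(-1)$- and $(-2)$-curves $C$ and $L_m\cdot f>0$. Alternatively, I could argue freeness directly via Reider's theorem: write $L_m = K_S + (L_m - K_S)$ and check that $L_m - K_S$ is nef with $(L_m-K_S)^2\ge 5$ (resp. $\ge 10$) and that there is no bad configuration; the general-position input again rules out the exceptional cases of Reider's theorem.

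The main obstacle will be step one done honestly: bookkeeping the action of the six Cremona transformations composing $\psi_1$ — and then its $m$-fold iterate — on the Néron–Severi lattice, and in particular verifying that $\phi_m^*H_{S_m}$ does not acquire any component that would make $L_m\cdot C<0$ on one of the finitely many negative curves. This is where the specific structure of the construction (the choice of which triples $\{1,2,3\},\{4,5,6\},\{7,8,9\}$ of base points are blown up and down, in that cyclic order) really matters, and where the claimed general-position conditions on $F_1,F_2$ and on the curve $\gamma\colon\bP^1\to|\cO_{\bP^2}(3)|$ get used to ensure the negative curves on $S$ are in generic position relative to the relevant linear systems. Once that linear-algebra/intersection-theory computation is pinned down, nefness, freeness and ampleness are all short consequences via the Mori cone of a rational elliptic surface (or via Reider).
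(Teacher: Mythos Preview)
The paper you are working from is a survey and does not itself prove this proposition; it merely quotes \cite[Proposition 2.6(iii)]{MR4406696}. So there is no in-paper argument to compare your proposal against, and I can only comment on the soundness of your outline on its own terms.

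Your outline contains a genuine gap. You assert that $\NEbar(S)$ is generated by the fiber class $f$ together with \emph{finitely many} curves of negative self-intersection, so that nefness of $L_m$ reduces to a finite list of intersection numbers. This is false for the surface at hand: the genericity hypotheses on $F_1,F_2$ force all fibers of $\pi_2\colon S\to\bP^1$ to be irreducible, and then the Mordell--Weil group of $S$ has rank $8$. Hence $S$ carries infinitely many sections, each a $(-1)$-curve, and $\NEbar(S)$ has infinitely many extremal rays. The ``finite, explicit check'' you describe simply does not exist, and the bookkeeping you identify as the main obstacle --- tracking $\phi_m^*H_{S_m}$ through six Cremona moves, $m$ times --- does not by itself resolve this, since even with $L_m$ written explicitly in the basis $H_S,E_1,\dots,E_9$ you must still test it against an infinite family of sections.

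To repair the argument you need a mechanism that handles all sections at once. Natural candidates are: an inductive relation expressing $L_m$ in terms of $L_{m-1}$ (or of $\phi_1^*L_{m-1}$) plus something manifestly nef, reducing to a single base case; or a lattice-theoretic bound exploiting that sections lie in a fixed coset of the $E_8$-lattice $f^\perp/\bZ f$, so that $\min_s L_m\cdot s$ can be computed as the minimum of a positive-definite quadratic function on that lattice; or exhibiting $L_m$ directly as a positive combination of classes already known to be free and to generate an ample class. Your Reider alternative does not sidestep the issue, since it requires $L_m-K_S = H_S+\phi_m^*H_{S_m}+(m-1)K_S$ to be nef, which is the same problem shifted by one.
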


It is not clear that the isomorphism $\phi_m$ can be realized as an automorphism of $S \subset \bP^2 \times \bP^1$ or not, but this is enough for our purpose. We also use the hypersurface $T$ as follows. 


\begin{prop} \cite[Proposition 2.9]{MR4406696}
Let $n \ge 2$ and let 
$$T:= (s G_1 + t G_2 =0) \subset \bP^1 \times \bP^n$$ be a general hypersurface of bi-degree $(1, n+1)$, where $[s\colon t] \in \bP^1$ are homogeneous coordinates and $G_1, G_2 \in |\cO_{\bP^n}(n+1)|$ are general.	

\item[(i)] Then the projection $T \rightarrow \bP^1$ is a Calabi-Yau fibration and the projection $T \rightarrow \bP^n$ is the blow-up along the subvariety $(G_1 = G_2 =0) \subset \bP^n$. 

\item[(ii)] Let $$D_S:= S \times \bP^n, D_T:= \bP^2 \times T \subset \bP^2 \times \bP^1 \times \bP^n$$ be divisors and $D_{ST}:= D_S \cap D_T$. 
Then $D_{ST}$ is a projective Calabi-Yau manifold and we have an isomorphism $D_{ST} \simeq S \times_{\bP^1} T$. 	
\end{prop}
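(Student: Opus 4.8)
The plan is to treat the two assertions essentially separately; both reduce to adjunction, a Bertini-type genericity argument, and the K\"{u}nneth formula, with no deep input.

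For (i), put $Z := (G_1 = G_2 = 0) \subset \bP^n$. For general $G_1, G_2$ this is a smooth complete intersection of codimension $2$, hence of dimension $n-2$; this is where the hypothesis $n \ge 2$ enters, since for $n = 1$ the center $Z$ would be empty. The key observation is that $T$ is the blow-up of $\bP^n$ along $Z$: as $G_1, G_2$ form a regular sequence cutting out $Z$, the blow-up $\mathrm{Bl}_Z \bP^n$ is the closure of the graph of the rational map $y \mapsto [G_1(y) : G_2(y)]$, namely the hypersurface $(u G_2(y) - v G_1(y) = 0) \subset \bP^n \times \bP^1$, and a linear change of coordinates on the $\bP^1$-factor identifies this graph with $T$ compatibly with the projections to $\bP^n$. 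In particular $T$ is smooth, being a blow-up of a smooth variety along a smooth center. For the other projection, the fiber of $T \to \bP^1$ over $[s_0 : t_0]$ is the degree-$(n+1)$ hypersurface $(s_0 G_1 + t_0 G_2 = 0) \subset \bP^n$; for general $G_1, G_2$ the general such fiber is smooth, so by adjunction it has trivial canonical bundle, and by the Lefschetz hyperplane theorem it is connected with $h^i(\cO) = 0$ for $0 < i < n-1$. Hence the general fiber is a strict Calabi--Yau $(n-1)$-fold and $T \to \bP^1$ is a Calabi--Yau fibration.

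For (ii), set $P := \bP^2 \times \bP^1 \times \bP^n$ and $\cO_P(a,b,c) := \cO_{\bP^2}(a) \boxtimes \cO_{\bP^1}(b) \boxtimes \cO_{\bP^n}(c)$. The identification $D_{ST} \simeq S \times_{\bP^1} T$ is immediate from the definitions: writing $S = (s F_1(x) + t F_2(x) = 0)$ and $T = (s G_1(y) + t G_2(y) = 0)$ and forming the fiber product along the projections $S \to \bP^1$ and $T \to \bP^1$, both sides are canonically the locus
\[
\{(x, [s:t], y) \in P \ :\ s F_1(x) + t F_2(x) = 0,\ \ s G_1(y) + t G_2(y) = 0\},
\]
which is precisely $D_S \cap D_T$. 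To see that $D_{ST}$ is a manifold I would invoke Bertini on $D_S$ rather than on $P$: the divisor $D_S = S \times \bP^n$ is smooth because $S$ is (a general, hence smooth, $(3,1)$-hypersurface), and $D_{ST}$ is cut out on $D_S$ by the restriction of the equation $s G_1(y) + t G_2(y) = 0$ of $D_T$. As $(G_1, G_2)$ runs over all pairs of degree-$(n+1)$ forms, the resulting divisors on $D_S$ form the complete linear system $|\,\pi_2^* \cO_{\bP^1}(1) \boxtimes \cO_{\bP^n}(n+1)\,|$ on $D_S$ (with $\pi_2 \colon S \to \bP^1$), which is base-point-free since each of its two factors is; Bertini then yields that the general member $D_{ST}$ is smooth of the expected dimension $n+1$. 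This is the step requiring the most care, because $S$ (hence $F_1, F_2$) is already fixed by the earlier construction, so base-point-freeness must be checked on that fixed variety --- which is elementary, as $[s:t] \neq [0:0]$ always leaves room for an equation $s G_1 + t G_2$ not vanishing at a prescribed point.

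It remains to verify the Calabi--Yau conditions. Triviality of the canonical bundle is adjunction for the complete intersection $D_{ST} = D_S \cap D_T$: since $\cO_P(D_S) = \cO_P(3,1,0)$, $\cO_P(D_T) = \cO_P(0,1,n+1)$ and $\omega_P = \cO_P(-3,-2,-n-1)$, one obtains $\omega_{D_{ST}} \simeq (\omega_P \otimes \cO_P(D_S) \otimes \cO_P(D_T))|_{D_{ST}} \simeq \cO_{D_{ST}}$. For the vanishing $h^i(\cO_{D_{ST}}) = 0$ with $0 < i < n+1$ I would feed the Koszul resolution
\[
0 \to \cO_P(-3,-2,-n-1) \to \cO_P(-3,-1,0) \oplus \cO_P(0,-1,-n-1) \to \cO_P \to \cO_{D_{ST}} \to 0
\]
into the K\"{u}nneth formula: each of the two middle summands contains the acyclic factor $\cO_{\bP^1}(-1)$, hence has vanishing cohomology in every degree, while $\cO_P(-3,-2,-n-1) = \omega_P$ has cohomology only in degree $\dim P = n+3$. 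Splitting the resolution into two short exact sequences then gives $H^i(\cO_{D_{ST}}) \cong H^{i+2}(\omega_P)$ for $i \ge 1$, which vanishes for $1 \le i \le n$, together with $H^0(\cO_{D_{ST}}) \cong H^0(\cO_P) = \bC$, so that $D_{ST}$ is connected. Finally $H^0(\Omega^i_{D_{ST}}) = 0$ for $0 < i < n+1$ follows from the vanishing of $H^i(\cO_{D_{ST}})$ by Hodge symmetry on the smooth projective variety $D_{ST}$, and projectivity is clear from $D_{ST} \subset P$; hence $D_{ST}$ is a projective Calabi--Yau $(n+1)$-fold. Of these steps only the cohomology computation carries genuine content, and it becomes routine once the Koszul--K\"{u}nneth set-up is in place; the real care goes into the genericity claims --- smoothness of $Z$, of $T$, and of $D_{ST}$ --- for the particular "general" choices permitted by the construction.
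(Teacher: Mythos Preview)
The paper does not supply its own proof of this proposition; it is simply quoted from \cite[Proposition 2.9]{MR4406696} as input for the construction that follows, so there is nothing to compare against here. Your argument is correct and is the natural one: the blow-up identification in (i) is the standard description of a blow-up along a codimension-$2$ complete intersection as the closure of the graph of the associated rational map to $\bP^1$, and your treatment of (ii) --- Bertini on $D_S$ for smoothness, adjunction for $\omega_{D_{ST}} \simeq \cO_{D_{ST}}$, and the Koszul resolution combined with K\"{u}nneth for the vanishing of $h^i(\cO_{D_{ST}})$ --- is exactly how one checks the Calabi--Yau conditions for a complete intersection in a product of projective spaces. Your claim that the restricted divisors sweep out the \emph{complete} linear system on $D_S$ is in fact correct, since $H^0(S, \pi_2^*\cO_{\bP^1}(1)) \simeq H^0(\bP^1, \cO_{\bP^1}(1))$ via $\pi_{2*}\cO_S \simeq \cO_{\bP^1}$; but as you implicitly note, only base-point-freeness is needed for Bertini.
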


\noindent{(\bf Construction of examples)} 
Let $S, S_m \subset \bP^2 \times \bP^1$ and $T \subset \bP^1 \times \bP^n$ be the hypersurfaces as above. 
Let $Y_1 := \bP^2 \times T=:Y_2$ and $D_1:= D_{ST}$ and $D_2:= D_{S_m T}$. Then the isomorphism $\phi_m \colon S \rightarrow S_m$ induces an isomorphism $\Phi_m \colon D_1 \rightarrow D_2$ via natural isomorphisms $ D_1 \simeq S \times_{\bP^1} T$ and $D_2 \simeq S_m \times_{\bP^1} T$. Hence we have a diagram 
\[
\xymatrix{
Y_1 & Y_2 \\
D_1 \ar[r]^{\simeq}_{\Phi_m} \ar@{^{(}->}[u] & D_2 \ar@{^{(}->}[u]  
}
\]
and define $Y_0:= Y_1 \cup^{\Phi_m} Y_2$. 
Then we see that $Y_0$ is an SNC CY variety since $D_i \in |{-}K_{Y_i}|$ and it is connected for $i=1,2$. Then $Y_0$ is not d-semistable by 
\begin{multline*}
\cN_{D_1 / Y_1} \otimes \Phi_m^* \cN_{D_2/Y_2} \\ 
\simeq \cO_{D_1} (p_S^*(3(H_S + \phi_m^* H_{S_m})+2(-K_S))) \simeq \cO_{D_1} (F_1+ \cdots + F_m + \Gamma_m), 
\end{multline*}
where $p_S \colon D_1 \rightarrow S$ is the projection and 
$F_i:= p_S^{-1}(f_i)$ for smooth $f_i \in |-K_S|$ for $i=1, \ldots m$ and $\Gamma_m:= p_S^{-1}(C_m)$ for smooth $C_m \in |3(H_S + \phi_m^* H_{S_m})+(2-m)(-K_S)|$. (One may see that the tensor product is ``quite positive'' since it decomposes to many divisors $F_1, \ldots, F_m, \Gamma_m$. This is why the 2nd Betti number of our examples $X(m)$ in the below can be large. ) 

Now let $\mu_1 \colon X_1 \rightarrow Y_1$ be the blow-up of $F_1, \ldots ,F_m$ and the strict transform of $\Gamma_m$ and $X_2:= Y_2$. 
Let $\tilde{D}_1 \subset X_1$ be the strict transform of $D_1 \subset Y_1$ with an isomorphism $\nu_1:= \mu_1|_{\tilde{D}_1}$. Then we have an isomorphism 
\[
\tilde{\Phi}_m := \Phi_m \circ \nu_1 \colon \tilde{D}_1 \rightarrow D_2
\] 
and let $$X_0(m):= X_1 \cup^{\tilde{\Phi}_m} X_2. $$
Then we see that $X_0(m)$ is an SNC CY variety by $\tilde{D}_1 \in |{-}K_{X_1}|$. We also see that $X_0(m)$ is d-semistable by the choice of the centers of the blow-up. 
By Theorem \ref{thm:smoothingSNC}, there exists a smoothing $\cX(m) \rightarrow \Delta^1$ of $X_0(m)$ and let $X(m)$ be its general fiber. 
Then we see that $X(m)$ is a Calabi-Yau manifold. This gives $X(m)$ as in Theorem \ref{thm:intro}. The properties can be checked as in \cite{MR4406696}. 
\end{eg}
 	
\section{Further examples and problems}	

\subsection{Further examples}
	
In \cite{Hashimoto:aa}, we constructed non-K\"{a}hler Calabi-Yau 3-folds by smoothing SNC CY 3-folds which are unions of two smooth ``quasi-Fano 3-folds'' glued along K3 surfaces. (More examples can be found in the earlier version of \cite{Hashimoto:aa} on arXiv.) 

We can also construct examples from SNC CY 3-folds whose intersection locus are abelian surfaces as Example \ref{eg:intersectionabelsurf}. Note that this example is new and such an example has not appeared before. 

\begin{eg}
We start with a degeneration of a projective CY 3-fold to an SNC CY 3-fold whose intersection locus is an abelian surface as follows. These should be well-known examples. 

Let $X=X_{(3,3)} \subset \bP^2 \times \bP^2$ be a general hypersurface of bi-degree $(3,3)$. 
Then it degenerates to an SNC hypersurface $Y_0:=E \times \bP^2 \cup \bP^2 \times E$, where $E \subset \bP^2$ is a cubic curve. 
Its intersection locus is the abelian surface $E \times E$. 
$Y_0$ is not d-semistable, but we may construct a d-semistable SNC CY 3-fold by blowing up suitable curves.  

Let $X:= X_{(3,0,1))} \cap X_{(0,3,1)} \subset \bP^2 \times \bP^2 \times \bP^1$ be the complete intersection of two hypersurfaces of bi-degrees $(3,0,1)$ and $(0,3,1)$ in $\bP^2 \times \bP^2 \times \bP^1$. This $X$ also has a fibration to $\bP^1$ whose general fiber is an abelian surface of product type. 

In these examples, abelian surfaces appeared as the intersection locus are of product type. 
It might be interesting to know whether simple abelian varieties can appear as such intersection locus, for example.   
\end{eg}

\begin{eg}\label{eg:intersectionabelsurf}
We can construct infinitely many topological types of non-K\"{a}hler CY 3-folds from SNC CY 3-folds whose singular locus are abelian surfaces as follows. This example is new to the author's knowledge. Note that we need the result of \cite{Chan:2019vv} since $H^1(\cO_{X_i}) \neq 0$ for the irreducible component $X_i$ in the following example.  

Let $E \subset \bP^2$ be a general cubic curve without complex multiplication. Let us fix a group structure on $E$ with a zero element $O \in E$. 
Let $f_i \subset E \times E$ be the fibers of the $i$-th projection $E \times E \rightarrow E$ for $i=1,2$ and $\Delta \subset E \times E$ be the diagonal. It is well known that the N\'{e}ron--Severi group $\NS(E \times E)$ is freely generated by the divisors $f_1, f_2, \Delta$.

 Let $a \in \bZ_{>1}$. 
Let $Y_1:= \bP^2 \times E$ and $Y_2 := E \times \bP^2$. 
Let $D_1 := E \times E \subset Y_1$ and $D_2:= E\times E \subset Y_2$. 
Let $$\phi_1 \colon E \times E \rightarrow E \times E ; (x,y) \mapsto (x-ay, y)$$ be the isomorphism induced  by the matrix $\begin{pmatrix}
1 & -a \\
0 & 1
\end{pmatrix}$ (use column vectors and $x-ay \in E$ is defined by the group structure of $E$). 
Similarly, let $\phi_2 \colon E \times E \rightarrow E\times E$ be the isomorphism induced  by the matrix $\begin{pmatrix}
1 & 0 \\
-a & 1
\end{pmatrix}$. Then we have the following diagram: 
\[
\xymatrix{
Y_1= \bP^2 \times E  & & E \times \bP^2 = Y_2 \\
D_1=E \times E \ar@{^{(}->}[u] &  E \times E \ar[l]_{\ \ \ \ \phi_1} \ar[r]^{\phi_2 \ \ \ \  } & E \times E = D_2 \ar@{^{(}->}[u] 
}. 
\]
Let $C_a:= \phi_1^* f_1$ and $D_a:= \phi_2^* f_2$. 
Then $C_a$ is numerically equivalent to the curve 
$\{ (ay, y) \mid y \in E \} \subset E \times E$. Then we compute that 
\[
C_a \cdot f_1 = a^2, \ C_a \cdot f_2 = 1, \ C_a \cdot \Delta= (a-1)^2. 
\]
From this, we obtain $C_a  \equiv (1-a) f_1 + (a^2 -a) f_2 + a \Delta$. 
Similarly, we compute $D_a \equiv (a^2-a) f_1 + (1-a) f_2 + a \Delta$.  By these and $ \deg \cO_{\bP^2}(3)|_E =9$, we obtain 
\[
\phi_1^* \cN_{D_1/Y_1} \otimes \phi_2^* \cN_{D_2/Y_2} \equiv 9( \phi_1^* (f_1) + \phi_2^* (f_2)) \equiv 9((a-1)^2(f_1+f_2) + 2a\Delta ). 
\]
By this, we see that the linear system $$|\phi_1^* \cN_{D_1/Y_1} \otimes \phi_2^* \cN_{D_2/Y_2} \otimes \cO_{E \times E} (- \Delta_1 - \cdots - \Delta_a)|$$ is ample and free, thus it contains a smooth member $\Gamma_a$. 

Let $\mu_1 \colon X_1 \rightarrow Y_1$ be the blow-up of smooth curves $\phi_1(\Delta_1), \ldots , \phi_1(\Delta_a)$ such that $\Delta_i \equiv \Delta$ for $i=1, \ldots, a$
and the strict transform of $\phi_1(\Gamma_a)$. Then $\mu_1$ induces an isomorphism $$\nu_1:= \mu_1|_{\tilde{D}_1} \colon \tilde{D}_1 \rightarrow D_1$$ from the strict transform $\tilde{D}_1 \subset X_1$ of $D_1$.  
Now let $$X_0:= X_0(a):= X_1 \cup^{\psi_a} Y_2$$ be an SNC variety 
which is a union of $X_1$ and $Y_2$ glued along the isomorphism 
\[
\psi_a:= \phi_2 \circ \phi_1^{-1} \circ \nu_1 \colon \tilde{D}_1 \rightarrow D_1 \rightarrow D_2. 
\]
Since $\tilde{D_1} \in |{-} K_{X_1}|$ and $D_2 \in |{-}K_{Y_2}|$, we see that $\omega_{X_0} \simeq \cO_{X_0}$. By the choice of the blow-up centers of $\mu_1$, we see that $X_0$ is d-semistable. 
Hence we can apply Theorem \ref{thm:smoothingSNC} to obtain a smoothing $\cX(a) \rightarrow \Delta^1$ and let $X(a)$ be its general fiber. 

\begin{prop} Let $X=X(a)$ be the 3-fold constructed as above. Then we have the following. 
\item[(i)] $X$ and $X_0$ are simply connected. 
\item[(ii)] $H^i(X, \cO_X) = 0 = H^0(X, \Omega^i_X)$ for $i=1,2$. 
Thus $X(a)$ is a Calabi-Yau 3-fold. 
\item[(iii)] The 2nd Betti number of $X$ is $b_2(X) = a+1$. 
\item[(iv)] The algebraic dimension $a(X)$ is $0$. 
\end{prop}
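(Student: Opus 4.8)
The plan is to verify the four assertions by combining the general smoothing machinery of Theorem~\ref{thm:smoothingSNC} (in the form due to Chan--Leung--Ma, since $H^1(\cO_{Y_i})\neq 0$) with a Mayer--Vietoris / Clemens--Schmid type analysis of the degeneration $\cX(a)\to\Delta^1$, exactly as in \cite{MR4406696} and \cite{Hashimoto:aa}. Throughout I would work with the SNC variety $X_0=X_1\cup^{\psi_a}Y_2$, whose two components are $X_1$ (a blow-up of $\bP^2\times E$ along $a+1$ smooth curves) and $Y_2=E\times\bP^2$, glued along $\tilde D_1\cong D_1\cong E\times E$ via $\psi_a$. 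The key numerical input, already recorded in the excerpt, is that $X_0$ is d-semistable and that the two normal bundles pulled back to $E\times E$ multiply to $9((a-1)^2(f_1+f_2)+2a\Delta)$.

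First I would prove (i). Since $X_1$ is a blow-up of $\bP^2\times E$ along smooth curves and $Y_2=E\times\bP^2$, and the gluing locus $E\times E$ is connected, van Kampen gives $\pi_1(X_0)$ as a pushout of $\pi_1(X_1)$ and $\pi_1(Y_2)$ over $\pi_1(E\times E)$; here the point is that the maps $\pi_1(E\times E)\to\pi_1(X_i)$ are \emph{not} injective — the two ``horizontal'' and ``vertical'' $E$-factors die in $\bP^2\times E$ in complementary ways under $\phi_1,\phi_2$ — and a short diagram chase shows the pushout is trivial. Then $\pi_1(X(a))=\pi_1(X_0)$ because the total space $\cX(a)$ retracts onto $X_0$ and the inclusion $X(a)\hookrightarrow\cX(a)$ is $\pi_1$-surjective (the general fiber of a semistable degeneration surjects onto $\pi_1$ of the central fiber), together with the fact that $\pi_1$ does not grow under smoothing in this setting. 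So both $X$ and $X_0$ are simply connected.

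Next (ii): I would compute $H^i(X,\cO_X)$ via the limit Hodge structure, i.e.\ by the $E_1$-degeneration of the Hodge--de Rham spectral sequence for the smoothing (which holds here, cf.\ Theorem~\ref{thm:intro}(iii) and the general results quoted), so that $h^i(\cO_X)=\dim\mathrm{Gr}^0_F H^i_{\lim}$, and then compute $H^i_{\lim}$ from the Mayer--Vietoris / Clemens--Schmid sequence attached to $X_0$. Concretely one needs $H^1(\cO_{X_0})=H^2(\cO_{X_0})=0$ and vanishing of the relevant pieces of $H^i(\cO_{E\times E})$ after taking the alternating (anti-invariant) part under the residue/gluing maps; the abelian surface contributes $h^1(\cO)=2$, $h^2(\cO)=1$, but the map $H^i(\cO_{X_1})\oplus H^i(\cO_{Y_2})\to H^i(\cO_{E\times E})$ is surjective precisely because each $\bP^2$-bundle kills a complementary one-dimensional piece — this is where the explicit matrices $\phi_1,\phi_2$ and the blow-up structure of $X_1$ enter. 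Dually, $H^0(X,\Omega^i_X)=0$ follows from $\mathrm{Gr}^i_F H^i_{\lim}=0$, or directly from the Hodge symmetry once $h^i(\cO_X)=0$. Together with $\omega_X\simeq\cO_X$ (from $\tilde D_1\in|{-}K_{X_1}|$, $D_2\in|{-}K_{Y_2}|$) this gives that $X(a)$ is a strict Calabi--Yau $3$-fold.

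For (iii) I would use that $b_2$ of a semistable smoothing of an SNC $3$-fold with $E_1$-degeneration equals $\dim\mathrm{Gr}^{W}_2 H^2_{\lim}$, computed from the Mayer--Vietoris spectral sequence: $H^2(X_0)$ receives $H^2(X_1)\oplus H^2(Y_2)$ modulo the image of $H^1(\tilde D_1)$ and with a correction from $H^0$ of the triple locus (empty here), plus the contribution of the ``vanishing cycles'' / the monodromy weight filtration. One has $b_2(Y_2)=b_2(E\times\bP^2)=2$ and $b_2(X_1)=b_2(\bP^2\times E)+(a+1)=2+(a+1)$ since $\mu_1$ blows up $a+1$ smooth curves; the gluing identifies the two copies of the ``$\bP^2$'' and ``$E$'' classes, kills one more class via $H^1(E\times E)$ (rank $4$) matched against the two components, and a bookkeeping of these identifications yields $b_2(X(a))=a+1$. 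I would cross-check this against the Euler characteristic computed additively from the components and the double locus. The main obstacle here is getting the Mayer--Vietoris bookkeeping exactly right — keeping track of which classes on $E\times E$ are already visible on $X_1$ versus $Y_2$, and whether the monodromy is unipotent of index matching a genuine degeneration (it is, since $X_0$ is d-semistable), so that no spurious $H^3$-contributions leak into $H^2$.

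Finally (iv): to show the algebraic dimension $a(X)=0$ I would argue that any effective divisor on $X(a)$ is rigid, hence $X(a)$ carries no nonconstant meromorphic function. The cleanest route: the specialization of a divisor class on the nearby fiber to $X_0$ lands in $H^2(X_0,\bZ)$ compatibly with restriction to each component, and one shows that the only classes surviving to the nearby fiber restrict to \emph{non-nef} or torsion-trivial classes on the components $X_1,Y_2$ — in fact $\rho(X(a))$ is spanned by the exceptional divisors of $\mu_1$ and one extra class, none of which is semiample on the total degeneration because the glued normal bundle $9((a-1)^2(f_1+f_2)+2a\Delta)$ fails to be trivial (this is exactly the non-d-semistability of the un-blown-up $Y_0$, repaired only by the blow-ups, which introduce purely exceptional, hence non-big, divisors). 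Thus $\kappa(X,D)\le 0$ for every effective $D$ and $a(X)=0$. I expect (iii) and the nef/bigness analysis in (iv) to be the delicate parts; (i) and the $\omega_X$-triviality in (ii) are essentially formal given the setup. The details can be carried out as in \cite{MR4406696}.
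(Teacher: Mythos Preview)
Your overall framework matches the paper's, but two steps have genuine gaps. For (i), the surjection $\pi_1(X)\twoheadrightarrow\pi_1(\cX)\simeq\pi_1(X_0)=\{1\}$ tells you nothing about $\pi_1(X)$, and ``$\pi_1$ does not grow under smoothing'' is simply false in general (a nodal rational curve smooths to an elliptic curve). The paper does \emph{not} go through the total space: it uses the Clemens map $c\colon X\to X_0$, a diffeomorphism over $X_0\setminus X_{12}$, to decompose $X$ itself. With $X_i':=X_i\setminus X_{12}$ and $\tilde X_{12}:=c^{-1}(X_{12})$ an $S^1$-bundle over $E\times E$, van Kampen gives $\pi_1(X)\simeq\pi_1(X_1')\ast_{\pi_1(\tilde X_{12})}\pi_1(X_2')$, and this pushout is shown to be trivial as in \cite[Claim~3.12]{Hashimoto:aa}. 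The extra circle generator in $\pi_1(\tilde X_{12})$ is exactly what must be killed, and your argument never touches it.

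For (iv), your sketch (``exceptional, hence non-big'' and ``the glued normal bundle is nontrivial'') does not exclude combinations of exceptional and pullback classes and misses the actual mechanism. The paper proves a precise claim: if $\cL_0\in\Pic X_0$ has $\cL_0|_{X_i}$ effective for $i=1,2$, then writing $\cL_1=\mu_1^*(\cO_{\bP^2}(\alpha)\boxtimes\cO_E(D_1))\otimes\cO(\sum b_jF_j)$ and $\cL_2=\cO_E(D_2)\boxtimes\cO_{\bP^2}(\alpha')$ with $\alpha,\alpha',d_1,d_2\ge 0$, the gluing relation in $\NS(E\times E)\simeq\bZ f_1\oplus\bZ f_2\oplus\bZ\Delta$ is expanded via $C_a\equiv(1-a,a^2-a,a)$, $D_a\equiv(a^2-a,1-a,a)$; equating the first two coordinates and eliminating $b_{a+1}$ gives an equation whose left side is nonpositive and right side nonnegative, forcing $\alpha=\alpha'=d_1=d_2=0$. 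This sign obstruction, produced by the specific shears $\phi_1,\phi_2$ with $a>1$, is the whole point and is absent from your proposal. (Two smaller remarks: in (ii) the paper's route is shorter---it uses $\pi_1(X)=1$ from (i) plus $E_1$-degeneration on $X$ directly, then Serre duality and the exponential sequence, rather than going through the limit MHS; and your fallback ``Hodge symmetry once $h^i(\cO_X)=0$'' is not available on a non-K\"ahler manifold. Also $H^2(\cO_{X_i})\to H^2(\cO_{E\times E})$ is $0\to\bC$, not surjective as you wrote, though this does not affect $H^2(\cO_{X_0})=0$.)
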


\begin{proof}
Recall that $X_1 \rightarrow \bP^2 \times E$ is the blow-up  of the $a+1$ smooth curves as above, $X_2 = E \times \bP^2$ and $X_{12} \simeq E \times E$. 

\noindent(i) This can be shown by a similar argument as in \cite[Proposition 3.10]{Hashimoto:aa} using the Clemens map $c \colon X \rightarrow X_0$. Recall that $c$ is a diffeomorphism over $X_0 \setminus X_{12}$, where $X_{12}:= X_1 \cap X_2 \simeq E\times E$. 
Indeed, by the Seifert--van Kampen theorem, we see that 
\[
\pi_1(X_0) \simeq \pi_1 (X_1) \ast_{\pi_1(X_{12})} \pi_1(X_2) \simeq \{ 1 \}. 
\]
Moreover, we see that 
\[
\pi_1(X) \simeq \pi_1(X_1') \ast_{\pi_1(\tilde{X}_{12})} \pi_1(X_2'), 
\]
where $X_i':= X_i \setminus X_{12}$ for $i=1,2$ and $\tilde{X}_{12}:= c^{-1}(X_{12})$ is an $S^1$-bundle over $X_{12}$. Then we can show that $\pi_1(X) \simeq \{1 \}$ as in \cite[Claim 3.12]{Hashimoto:aa}. 


\vspace{2mm}

\noindent(ii) Since we obtain $H^1(X, \bZ)=0$ by $\pi_1(X) = \{1 \}$, we see that $H^1(X, \cO_{X}) =0 = H^0(X, \Omega^1_X)$ by the $E_1$-degeneration of the Hodge to de Rham spectral sequence on $X$ (cf. \cite[Remark 3.8]{Hashimoto:aa}). By Serre duality, we see that $H^2(X, \cO_X) =0$. We see that $\Pic (X) \simeq H^2(X, \bZ)$ by the exponential sequence. This implies that $H^0(X, \Omega^2_X) =0$. 

\vspace{2mm}

\noindent(iii) We have an exact sequence 
\[
0 \rightarrow H^2(X_0, \bZ) \rightarrow H^2 (X_1, \bZ) \oplus H^2(X_2, \bZ) \xrightarrow{\alpha} H^2(X_{12}, \bZ). 
\]
Note that the injectivity on the left follows from the surjectivity of the homomorphism $H^1(X_1, \bZ) \oplus H^1(X_2, \bZ) \rightarrow H^1(X_{12}, \bZ)$ by explicit calculation.  
Since $\Image \alpha \simeq \bZ^3$, $H^2(X_1, \bZ) \simeq \bZ^{2+a+1}$ and $H^2 (X_2, \bZ) \simeq \bZ^{2}$, we see that $H^2(X_0, \bZ) \simeq \bZ^{a+2}$. 
By the standard argument as in \cite[2.6]{MR4085665}, we obtain an exact sequence 
\begin{equation}\label{eq:CSseq}
H^1(X, \bC) \rightarrow H^0(X_{12}, \bC) \rightarrow H^2(X_0, \bC) \rightarrow H^2(X, \bC) \rightarrow 0
\end{equation}
 and see that $H^2(X,\bC) \simeq \bC^{a+1}$ by this sequence. Hence we obtain $b_2(X) = a+1$. 

\vspace{2mm}

\noindent(iv) Note that $H^2(X_0, \bZ) \simeq H^2(\cX, \bZ) \simeq \Pic \cX$ and $H^2(X, \bZ) \simeq \Pic X$ by $H^i(X, \cO_X) =0 = H^i(X_0, \cO_{X_0})$ for $i=1,2$. Hence we have a surjection $\Pic \cX \rightarrow \Pic X$ by the sequence (\ref{eq:CSseq}). 

Suppose that there exists a line bundle $\cL_t \in \Pic X$ with $\kappa (\cL_t) \ge 1$. Then there exists $\cL_0 \in \Pic X_0$ such that $\kappa(\cL_0) \ge 1$ and $\cL_0|_{X_i}$ is effective for $i=1,2$ by the same argument as in \cite[Proposition 3.19 (iii)]{Hashimoto:aa}. This contradicts the following claim. 

\begin{claim}\label{claim:effective}
Let $\cL_0 \in \Pic X_0$ be a line bundle such that $\cL_i := \cL_0|_{X_i}$ is effective for $i=1,2$. Let $F_1, \ldots ,F_{a+1} \subset X_1$ be the exceptional divisors over $\phi_1(\Delta_1), \ldots , \phi_1(\Delta_a)$ and $\phi_1(\Gamma_a)$. Write 
\[
\cL_1 = \mu_1^*(\cO_{\bP^2}(\alpha) \boxtimes \cO_E(D_1)) \otimes \cO_{X_1} \left( \sum_{j=1}^{a+1} b_j F_j \right), \ \ 
\cL_2 = \cO_E(D_2) \boxtimes \cO_{\bP^2}(\alpha') 
\]
for some integers $\alpha, \alpha', b_1, \ldots , b_{a+1}$ and let $d_i:= \deg D_i$ for $i=1,2$. 
(For varieties $Z_i$ and $\cL_i \in \Pic Z_i$ for $i=1,2$, let $\cL_1 \boxtimes \cL_2:= \pi_1^* \cL_1 \otimes \pi_2^* \cL_2 \in \Pic (Z_1 \times Z_2)$ for the projections $\pi_i \colon Z_1 \times Z_2 \rightarrow Z_i$.  )

Then we see that $\alpha = \alpha'=0$ and $d_1=d_2 =0$. 
In particular, $\kappa(\cL_0) = 0$. 
\end{claim}

\begin{proof}[Proof of Claim]
Since $\cL_i$ is effective, we see that $\alpha, \alpha', d_1, d_2 \ge 0$. Suppose that $(\alpha, d_1) \neq (0,0)$. Then we have $(\alpha', d_2) \neq (0,0)$. We shall see a contradiction in the following. 
We use the identification $$\NS (E \times E) \simeq \bZ^3 ; \ \  a_1 f_1+ a_2 f_2 + a_3 \Delta \mapsto (a_1, a_2, a_3). $$
By the isomorphism $\cL_1|_{X_{12}} \simeq \cL_2|_{X_{12}}$, 
we see that 
\[
3 \alpha \cdot C_a + p_2^*(D_1) + \sum_{j=1}^a b_j \Delta + b_{a+1} \Gamma_a \equiv p_1^*(D_2) + 3\alpha' \cdot D_a, 
\]
where $p_i \colon E \times E \rightarrow E$ is the $i$-th projection for $i=1,2$. 
By the above identification, this corresponds to the equality 
\begin{multline*}
3 \alpha (1-a, a-a^2,a) + (0,d_1,0) + \sum_{j=1}^a b_j (0, 0, 1) + 9b_{a+1} \left( (a-1)^2, (a-1)^2, a \right) \\ 
= (d_2, 0,0) + 3\alpha' (a^2-a, 1-a, a) \in \bZ^3. 
\end{multline*}
The equality on the 1st and 2nd coordinates implies the equality 
\begin{multline*}
\left( 3\alpha(1-a), 3\alpha(a^2-a) +d_1 \right) + 9b_{a+1} \left( (a-1)^2, (a-1)^2 \right) \\ 
= \left( d_2+3\alpha' (a^2-a), 3\alpha' (1-a) \right) 
\end{multline*}
and we see that 
\[
3\alpha (1-a) - (d_2 + 3 \alpha'(a^2-a)) = (3\alpha(a^2-a) + d_1) - 3\alpha'(1-a). 
\]
This is a contradiction since the L.H.S.\ is negative and the R.H.S.\ is positive. This finishes the proof of Claim \ref{claim:effective}
\end{proof}

By Claim \ref{claim:effective}, we see that there does not exist $\cL_t \in \Pic X$ such that $\kappa (\cL_t) \ge 1$. 
Hence we see that $a(X)=0$.
\end{proof}
\end{eg}

\subsection{Problems} 
Let $X_0 = X_1 \cup X_2$ be an SNC CY variety with two smooth irreducible components $X_1$ and $X_2$. 
Degenerations of a smooth CY manifold to such an SNC CY variety 
are called {\it Tyurin degenerations} although it is often assumed that $X_1, X_2$ are ``quasi-Fano'' varieties and $X_1 \cap X_2$ is a (strict) CY manifold (cf. \cite[5.4]{MR2708981}, \cite{MR3751815}). In Example \ref{eg:intersectionabelsurf}, we treated CY 3-folds which admit degenerations to SNC CY 3-folds of the form $X_1 \cup X_2$ such that $X_1 \cap X_2$ are abelian surfaces.   
One can ask which kind of varieties can appear as the intersection locus of such SNC Calabi-Yau varieties. 

\begin{eg}
Let $X \subset \bP^n \times \bP^m$ be a smooth hypersurface of bi-degree $(n+1, m+1)$. Then it degenerates to an SNC CY variety 
$X_0=\bP^n \times X_{m+1} \cup X_{n+1} \times \bP^m$.  The intersection locus of $X_0$ is $X_{n+1} \times X_{m+1}$, thus a product of two Calabi-Yau manifolds can appear as the intersection locus. 
\end{eg}

%

Tyurin \cite{MR2112600} called a CY 3-fold with Tyurin degenerations a {\it constructive}  CY 3-fold. He speculated that every CY 3-fold is constructive. 
(A referee pointed out that the mirror quintic 3-fold has very few deformations and none of these is a Tyurin degeneration (cf. \cite{MR2282973}, \cite{MR3751815}). )

It is also written that topological types of constructive CY 3-folds should be finite. Although there are infinitely many topological types of non-K\"{a}hler Calabi-Yau manifolds with Tyurin degenerations as in examples in Section \ref{section:CYnexamples}, it is still not clear in the projective case.  

It would also be interesting to see bimeromorphic relations between our examples of non-K\"{a}hler CY manifolds (cf. \cite{MR909231}). 
	
\section*{Acknowledgement}
The author is grateful to Kenji Hashimoto for useful discussions. He thanks Alexander Kasprzyk for the opportunity of the seminar talk and the proceedings. 
He would also like to thank the referees for constructive comments. 
This work was partially supported by JSPS KAKENHI Grant Numbers JP17H06127, JP19K14509.

\bibliographystyle{amsalpha}
\bibliography{sanobibs-bddlogcy}

\end{document}